\theoremstyle{plain}
\newtheorem{thm}{Theorem}
[section]
\newtheorem{lem}{Lemma}
[section]
\begin{document}
\large

\begin{center}
{\Large\bf
On a stationary random knot}\\
Andrey A.Dorogovtsev\\[10 pt]
Institute of mathematics\\
National Academy of Sciences of Ukraine\\
andrey.dorogovtsev@gmail.com

\end{center}

{\bf Abstract}

It this article the construction of a stationary random knot is proposed. The corresponding smooth random curve has no self-intersections in deterministic moments of time and changes its topological type at random moments.

{\bf Mathematical subject classification}: 60B99, 60G60, 60H10.

{\bf Key words}: random curve, random knot, random field, equation with interaction.

\section*{Introduction}

In this article the construction of a stationary process of closed smooth random curves in ${\mathbb R}^3$ is proposed. The main advantage of this construction consists of  two features. Firstly, for fixed deterministic moment  of time obtained random curve has no self-intersections. Thus it is indeed a random knot. Secondly, it changes topological type with time. The interest to such an object arose in the mathematical modeling of linear polymers. There exists a well-developed theory of polymer models based on the modeling random walk or Brownian motion in Euclid space (see for example [1-3]). One of the essential problems here  is the existence of the self-intersection points which are not present in the real linear polymer. To overcame this difficulty some counting and penalization of self-intersections is provided [4 -- 11]. Another important feature of models based on the Brownian motion is the nondifferentiability of trajectories. This causes problems with geometric properties of the obtained random curve. It leads to the use of the Haussdorff measure or consideration of self-intersection renormalized local times as the Minkowski functionals in tube formula [11]
 Such complicated approach works well for Brownian motion or for Gaussian process of similar structure [9 -- 11]
 But if we want to consider a moving curve it is not natural to assume that a curve that changes its shape with time can still be described as Brownian motion.

 In this paper we construct the stationary smooth closed random curve using a stochastic differential equation with interaction. Such kind of equations were introduced by the author [13 -- 17] in order to describe the motion of large systems of interacting particles together with their mass distribution. It occuers that if one takes as initial mass distribution the visitation measure of a certain curve then equation can be interpreted as a rule of motion of this curve in the space. This idea has one disadvantage. Namely, all obtained curves are homotopic because the flow generated by the equation with reasonably good coefficients consists of homeomorphisms. Hence the moving curve will keep its topological type. To allow the  changing of the type (which also means that in some moments curve must have intersections) we consider not the solution to equation with interaction but its image under the random mapping from ${\mathbb R}^3$ to ${\mathbb R}^3.$ This random mapping is choosen to be centered Gaussian with smooth rapidly decreasing covariance. Obtained random curve has interesting properties. On the one hand it has no self-intersections, but on the other hand it changes its topological type. This, in particular, means that there are self-intersections at random moments of time. It must be noted that equations with interaction were used for description of the motion of random curves or surfaces in 
[18], 
 but in that article  non-smooth surfaces where considered. Also, they remain to be homotopic to initial surface and the self-intersection local times were investigated as their geometrical characteristics. In contrast to the mentioned work, the stationary random knot, which is built here, is smooth and changes its topological type.To present this construction, the paper is devided onto three sections. In the first section we consider the image of the smooth deterministic closed curve under the random mapping. This random mapping itself is a rotation and translation invariant centered Gaussian random field. As an example of deterministic curve we consider the circle on the plane. It is proved in the first section, that obtained random curve has no self-intersections and can be of arbitrary topological type with positive probability. Of cause, all statements from this section remain valid for an arbitrary initial closed curve, not just a circle. In the second section  equations with interaction are applyed to the description of the motion of the curve. Here the image of the  moving measure under the random mapping is introduced and its mean characteristics are included into coefficients of equation. Last section contains the construction of the stationary random knot as an application of the statements from the previous two sections.

\section{Curves under stationary random mappings}
\label{section1}
In this section we consider the image of the smooth curve on the plane under stationary random mapping acting from ${\mathbb R}^2$ to ${\mathbb R}^3$ and discuss its topological properties. The construction will be as follows. Let $\vec{\xi}=(\xi_1, \xi_2, \xi_3): {\mathbb R}^2\to{\mathbb R}^3$ be a centered Gaussian random field with independent and identically distributed coordinates with the  covariance
\begin{equation}
\label{eq1.1}
E\xi_1(\vec{u})\xi_1(\vec{v})=\Gamma(\vec{u}-\vec{v})=e^{-\|\vec{u}-\vec{v}\|^2}.
\end{equation}
Since the covariance \eqref{eq1.1} is infinitely  differentiable, then due to Gaussianity,  $\vec{\xi}$ has an infinitely  differentiable on ${\mathbb R}^2$ modification. Define a random curve $\gamma$ in ${\mathbb R}^3$ using parametrization $\vec{\gamma}(t)=\vec{\xi}(\cos t, \sin t), \ t\in[0; 2\pi].$ So, $\gamma$ is an image of the unit circle $S$ on the plane under the mapping $\vec{\xi}.$
 Here we prove the following statements about the random curve $\gamma.$
\begin{thm}
\label{thm1}
With probability one $\gamma$ has no points of self-intersection, namely
$$
P\{\exists \, t, s\in[0; 2\pi), t\ne s: \ \ 
\vec{\gamma}(t)=\vec{\gamma}(s)\}=0.
$$
\end{thm}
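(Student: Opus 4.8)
The plan is to reduce the statement, by a countable exhaustion of $[0;2\pi)^2$ away from the diagonal, to a codimension-type estimate on compact parameter sets, where it follows from the smoothness of $\vec{\gamma}$ together with the nondegeneracy of the Gaussian law of $\vec{\gamma}(t)-\vec{\gamma}(s)$. First I would record the one-point fact. For $t\neq s$ in $[0;2\pi)$ set $\vec{G}(t,s)=\vec{\gamma}(t)-\vec{\gamma}(s)=\vec{\xi}(\cos t,\sin t)-\vec{\xi}(\cos s,\sin s)$. Since $\xi_1,\xi_2,\xi_3$ are independent and identically distributed, the three coordinates of $\vec{G}(t,s)$ are independent centered Gaussian random variables with the common variance $\sigma^2(t,s)=2\bigl(\Gamma(0)-\Gamma(\vec{u}(t)-\vec{u}(s))\bigr)=2\bigl(1-e^{-4\sin^2((t-s)/2)}\bigr)$, where $\vec{u}(t)=(\cos t,\sin t)$. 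This is strictly positive whenever $t\neq s$ in $[0;2\pi)$, so $\vec{G}(t,s)$ has a bounded Gaussian density on ${\mathbb R}^3$; in particular $P\{\vec{G}(t,s)=0\}=0$ for each fixed such pair. A union bound over the uncountably many pairs is of course unavailable, and this is the only real difficulty.

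To handle all pairs at once, for $n\geq1$ let $D_n=\{(t,s)\in[0;2\pi]^2:\ \rho(t,s)\geq 1/n\}$, where $\rho$ is the geodesic distance on the circle. Each $D_n$ is compact and disjoint from the diagonal, and on $D_n$ one has $\sigma^2(t,s)\geq\sigma_n^2>0$, so the densities of $\vec{G}(t,s)$, $(t,s)\in D_n$, are bounded by a single constant $K_n<\infty$. I would then apply a Bulinskaya-type lemma on the absence of zeros of a smooth map with overdetermined range: since $\vec{G}$ is $C^\infty$ and maps the $2$-dimensional compact $D_n$ into ${\mathbb R}^3$, with uniformly bounded one-point densities there, $P\{\exists\,(t,s)\in D_n:\ \vec{G}(t,s)=0\}=0$. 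The argument is a covering one: split $D_n$ into $O(\varepsilon^{-2})$ cubes of side $\varepsilon$; on the event $\{\sup_{[0;2\pi]^2}\|D\vec{G}\|\leq M\}$, a zero inside a cube $Q$ forces $\|\vec{G}(x_Q)\|\leq CM\varepsilon$ at the centre $x_Q$, an event of probability at most $K_n\,\mathrm{vol}(B(0,CM\varepsilon))=O(M^3\varepsilon^3)$; summing over the $O(\varepsilon^{-2})$ cubes gives $O(M^3\varepsilon)\to0$ as $\varepsilon\to0$, hence $P\{\exists\,(t,s)\in D_n:\ \vec{G}(t,s)=0,\ \sup\|D\vec{G}\|\leq M\}=0$ for every $M$; letting $M\to\infty$ and using that $\vec{\gamma}\in C^\infty$ makes $\sup_{[0;2\pi]^2}\|D\vec{G}\|<\infty$ almost surely completes this step.

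It remains to assemble the pieces. Every pair $t\neq s$ in $[0;2\pi)$ has $\rho(t,s)>0$, hence belongs to $D_n$ for all large $n$, so
$$
\{\exists\,t\neq s\in[0;2\pi):\ \vec{\gamma}(t)=\vec{\gamma}(s)\}\subseteq\bigcup_{n\geq1}\{\exists\,(t,s)\in D_n:\ \vec{G}(t,s)=0\},
$$
a countable union of null events, which gives the theorem; restricting to the sets $D_n$ is exactly what makes the near-diagonal behaviour of $\vec{\gamma}$ irrelevant here. I expect the genuinely delicate point to be the covering lemma of the second step — in particular taking the two limits ($\varepsilon\to0$, then $M\to\infty$) in the correct order and checking that the Gaussian density of $\vec{G}(t,s)$ is bounded uniformly on each $D_n$, which is precisely where the nondegeneracy $\Gamma(0)>\Gamma(\vec{h})$ for $\vec{h}\neq0$ of the covariance \eqref{eq1.1} enters; the smoothness input and the one-point computation are routine.
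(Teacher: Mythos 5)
Your proof is correct, but it is assembled differently from the paper's. The paper proves the theorem by a first-moment argument on the approximate self-intersection local time: Lemma \ref{lem1.2} shows that $\lim_{\varepsilon\to0}E\int_a^b\int_c^d h_\varepsilon(\vec{\gamma}(t_1)-\vec{\gamma}(t_2))\,dt_1dt_2$ is finite on off-diagonal rectangles (dominated convergence plus the bounded density of the nondegenerate Gaussian increment), while pathwise, on the event of an intersection, smoothness forces $\int\int h_\varepsilon\geq C(\omega)\varepsilon^{-1}$ (the preimage of an $\varepsilon$-ball under a Lipschitz map of a $2$-dimensional parameter set has area $\gtrsim\varepsilon^2$, against the $\varepsilon^{-3}$ normalization of $h_\varepsilon$); Fatou's lemma then rules out positive probability of intersection. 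You use exactly the same two ingredients — the uniform one-point density bound away from the diagonal, which is where the strict inequality $\Gamma(0)>\Gamma(\vec{h})$ for $\vec{h}\neq0$ enters, and the a.s.\ Lipschitz bound coming from smoothness of $\vec{\xi}$ — but you assemble them as a Bulinskaya-type covering/union bound over a grid of $O(\varepsilon^{-2})$ cubes rather than as an expectation-versus-pathwise-blow-up comparison. Both arguments live on the same dimension count $2<3$. Your version is arguably the more standard and self-contained one (the covering lemma is completely explicit, and your order of limits $\varepsilon\to0$ then $M\to\infty$ is handled correctly), and your exhaustion by the sets $D_n$ defined via geodesic distance on the circle cleanly takes care of the wrap-around pairs $t$ near $0$, $s$ near $2\pi$, which in the paper is absorbed into the choice of disjoint parameter intervals $[a;b]$, $[c;d]$. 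The paper's version has the advantage of introducing the approximate self-intersection local time, an object of independent interest in this line of work. The only detail you leave implicit that the paper spells out is the measurability of the intersection event, which in your setting follows from continuity of $\vec{G}$ on the compact $D_n$ (the event is $\{\inf_{D_n}\|\vec{G}\|=0\}$); this is routine and not a gap.
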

\begin{thm}
\label{thm2}
Let $\vec{f}\in C^2([0; 2\pi])$ be a parametrization of a smooth closed curve in ${\mathbb R}^3$ without self-intersections. Then the set of $\omega\in\Omega$ such that $\vec{\xi}(\gamma_0)$ is homotopic to $\vec{f}$ is a random event of positive probability.
\end{thm}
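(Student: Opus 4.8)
The plan is to combine three ingredients: the fact from Theorem \ref{thm1} that $\gamma=\vec\xi(\gamma_0)$ is almost surely an embedded curve; the differential-topological fact that the knot type of a $C^1$ embedding of $S^1$ into ${\mathbb R}^3$ is locally constant in the $C^1$ topology (two $C^1$-close embeddings are ambient isotopic, by the isotopy extension theorem — and note that ``homotopic'' here is meant in this sense of knot type, since in ${\mathbb R}^3$ all loops are homotopic); and a support statement for the Gaussian field $\vec\xi$. First I would fix a convenient representative of the knot type of $\vec f$. Approximating the coordinates of $\vec f$ by trigonometric polynomials (for instance by Fej\'er means, which converge in $C^2[0;2\pi]$ for a $C^2$ periodic function) yields a trigonometric polynomial map $\vec h\colon[0;2\pi]\to{\mathbb R}^3$ arbitrarily $C^1$-close to $\vec f$; since embeddings of $S^1$ form a $C^1$-open set and $C^1$-close embeddings are isotopic, $\vec h$ is again an embedding and is isotopic to $\vec f$ once the approximation is close enough. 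Because $\cos nt$ and $\sin nt$ are polynomials in $\cos t$ and $\sin t$, one can write $\vec h(t)=\vec P(\cos t,\sin t)$ for a polynomial map $\vec P\colon{\mathbb R}^2\to{\mathbb R}^3$ — and it is precisely to obtain such an extension of the target curve from $S$ to a neighbourhood in ${\mathbb R}^2$ that the reduction to trigonometric polynomials is made. Fix a compact annular neighbourhood $U$ of the unit circle $S$ in ${\mathbb R}^2$.

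The second step is a quantitative stability statement: there is $\varepsilon>0$ such that every $\vec\psi\in C^1(U,{\mathbb R}^3)$ with $\|\vec\psi-\vec P\|_{C^1(U)}<\varepsilon$ produces a curve $t\mapsto\vec\psi(\cos t,\sin t)$ which is an embedding of $S^1$ isotopic to $\vec h$, hence to $\vec f$. Indeed, by the chain rule the closeness of $\vec\psi$ to $\vec P$ on $U$ forces closeness of the composed curves in $C^1(S^1,{\mathbb R}^3)$, and then the openness and isotopy facts recalled above apply.

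The main work is the support estimate $P\{\|\vec\xi-\vec P\|_{C^1(U)}<\varepsilon\}>0$. The restriction $\vec\xi|_U$ is a centered Gaussian random element of the separable Banach space $C^1(U,{\mathbb R}^3)$ — it has a $C^1$ (indeed $C^\infty$) modification because $\Gamma$ is $C^\infty$ — so its topological support equals the closure in $C^1(U,{\mathbb R}^3)$ of its Cameron--Martin space $\mathcal H$, which coordinatewise is the reproducing kernel Hilbert space of $\Gamma$. It therefore suffices to check that $\vec P$ lies in the $C^1(U)$-closure of $\mathcal H$. For this I would use the Fourier description of the RKHS of the Gaussian kernel: $g$ belongs to it iff $\int|\widehat g(\omega)|^2 e^{\|\omega\|^2/4}\,d\omega<\infty$. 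Damping the polynomial, put $\vec P_R(u):=\vec P(u)\,e^{-\|u\|^2/R^2}$; its Fourier transform is a polynomial times a Gaussian of width of order $1/R$, so $\vec P_R\in\mathcal H$ as soon as $R$ is large (explicitly $R^2>1/2$), while on the bounded set $U$ one has $e^{-\|u\|^2/R^2}\to1$ in $C^1(U)$ and hence $\vec P_R\to\vec P$ in $C^1(U)$ as $R\to\infty$. Thus $\vec P$ is in the support of the law of $\vec\xi|_U$, so every $C^1$-neighbourhood of $\vec P$, in particular the one of radius $\varepsilon$, has positive probability.

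Putting the pieces together: on the positive-probability event $\{\|\vec\xi-\vec P\|_{C^1(U)}<\varepsilon\}$ the curve $\gamma=\vec\xi(\gamma_0)$ is, by the stability step, an embedding isotopic to $\vec f$; and this set is measurable since, up to a null set by Theorem \ref{thm1}, it is determined by the $C^1$-continuous random element $\vec\xi|_U$. This proves the theorem. I expect the principal obstacle to be the support part: identifying the $C^1$-support with the closure of the Cameron--Martin space, and verifying concretely that $\vec P$ lies in that closure via the damping computation; the differential-topology inputs are classical, and alternatively one could simply invoke that the Gaussian RBF kernel is $C^1$-universal, which gives $C^1(U)$-density of $\mathcal H$ directly and bypasses the explicit estimate.
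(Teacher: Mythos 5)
Your proposal is correct, and it reaches the conclusion by a genuinely different route than the paper. Both arguments share the same skeleton --- a support theorem for the Gaussian field plus stability of the knot type under small perturbations --- but every ingredient is implemented differently. The paper works one-dimensionally: it studies the law of the restriction of the field to the circle (more precisely, of its second derivative) in the space $M=\{g:\ g=f'',\ f\in C^2_p([0;2\pi])\}$ with the uniform norm, proves full support by a Hahn--Banach duality argument (a functional annihilating the support gives a signed measure whose convolution with $\Gamma$ vanishes, hence vanishes by nondegeneracy of $\widehat\Gamma$), and then carries out explicit elementary estimates showing that $C^2$-closeness of parametrizations preserves absence of self-intersections and the crossing pattern of the planar diagram; measurability is obtained separately via polygonal approximation and Reidemeister moves. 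You instead work two-dimensionally on an annular neighbourhood $U$ of $S$: you must therefore first extend the target curve to a map on $U$, which is exactly what your trigonometric-polynomial step $\vec h(t)=\vec P(\cos t,\sin t)$ accomplishes, and you then identify the $C^1(U)$-support with the closure of the Cameron--Martin space and verify membership of $\vec P$ by the explicit damping computation $\vec P_R=\vec P\,e^{-\|u\|^2/R^2}$ (your threshold $R^2>1/2$ is the right one for the kernel $e^{-\|u-v\|^2}$, whose Fourier transform is proportional to $e^{-\|\omega\|^2/4}$). For stability and measurability you invoke the isotopy extension theorem and the resulting $C^1$-openness of an isotopy class of embeddings, which replaces both the paper's hands-on perturbation estimates and its Reidemeister-move measurability argument; it would be worth stating explicitly that measurability of the full event follows because the isotopy class is an open subset of $C^1(S^1,\mathbb R^3)$ and $\omega\mapsto\vec\gamma(\cdot)(\omega)$ is measurable into that space, rather than only remarking that the event is ``determined by'' $\vec\xi|_U$. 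What your approach buys is a shorter, more modular proof resting on standard machinery (Gaussian support theorem, RKHS of the Gaussian kernel, isotopy extension); what the paper's buys is self-containedness at the level of elementary estimates and a direct treatment on the circle that avoids the need to extend the target curve off $S$.
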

We prove first some auxiliary lemmas.
Denote 
$$\zeta_1(t)=\xi_1(\cos t, \sin t), \  t\in [0; 2\pi].$$
\begin{lem}
\label{lem1}
If $\mu_\gamma$ is the distribution of the Gaussian random function $\zeta_1(t), t\in[0; 2\pi]$ in the space $C_p([0; 2\pi])$ of continuous periodic functions, then
$$
\rm{supp}\, \mu_\gamma= C_p([0; 2\pi]).
$$
\end{lem}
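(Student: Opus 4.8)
The plan is to prove that the distribution $\mu_\gamma$ of the periodic Gaussian process $\zeta_1(t)=\xi_1(\cos t,\sin t)$ has full support in $C_p([0;2\pi])$ by invoking the standard characterization of the support of a Gaussian measure: $\mathrm{supp}\,\mu_\gamma$ is the closure in $C_p([0;2\pi])$ of the reproducing kernel Hilbert space $H$ associated with the covariance $K(t,s)=\Gamma(\cos t-\cos s,\sin t-\sin s)$. So it suffices to show that $H$ is dense in $C_p([0;2\pi])$, equivalently that the only continuous (periodic) linear functional vanishing on $H$ is zero, or more concretely that the functions $t\mapsto K(t,s)$, $s\in[0;2\pi]$, together with their span, are dense in $C_p$.

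First I would set up the RKHS explicitly. Since $\Gamma(\vec u-\vec v)=e^{-\|\vec u-\vec v\|^2}$ is (a rescaling of) the Gaussian kernel on $\mathbb R^2$, which is strictly positive definite, and the map $t\mapsto(\cos t,\sin t)$ is injective on $[0;2\pi)$, the restricted kernel $K(t,s)$ is strictly positive definite on the circle. The key analytic input is that the Gaussian kernel has a known Mercer-type expansion; restricting to the unit circle one may expand $e^{-\|\vec u-\vec v\|^2}=e^{-2+2\cos(t-s)}=e^{-2}\sum_{n\ge 0}\frac{2^n}{n!}\cos^n(t-s)$, and each $\cos^n(t-s)$ is a trigonometric polynomial in $t-s$. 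Grouping terms, $K(t,s)=\sum_{k\in\mathbb Z}c_k e^{ik(t-s)}$ with all Fourier coefficients $c_k>0$ (this positivity is the crucial point — it follows from the fact that $e^{2\cos\theta}=\sum_k I_k(2)e^{ik\theta}$ with modified Bessel values $I_k(2)>0$ for every $k$). Hence the RKHS $H$ consists exactly of periodic functions $g(t)=\sum_k \hat g(k)e^{ikt}$ with $\sum_k |\hat g(k)|^2/c_k<\infty$, which contains every finite trigonometric polynomial.

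Next I would conclude density: the trigonometric polynomials are dense in $C_p([0;2\pi])$ (Weierstrass / Fejér), so $H\supseteq\{\text{trig. polynomials}\}$ is dense in the sup norm, and therefore $\mathrm{supp}\,\mu_\gamma=\overline{H}=C_p([0;2\pi])$. For the reader who prefers to avoid the RKHS machinery, an alternative route is a direct small-ball / Cameron–Martin argument: show that for any trigonometric polynomial $g$ one has $\mu_\gamma(\|\zeta_1-g\|_\infty<\varepsilon)>0$ (translate by the Cameron–Martin shift, which is legitimate precisely because $g\in H$, and use that the centered Gaussian measure charges every sup-norm ball, the latter following from positivity of the covariance operator / continuity of sample paths). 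Either way the engine is the same positivity fact.

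The main obstacle is verifying that the kernel $K(t,s)$, viewed as a function of $t-s$ on the circle, has all strictly positive Fourier coefficients — i.e. identifying the RKHS on the circle rather than on $\mathbb R^2$. The subtlety is that the ambient Gaussian kernel on $\mathbb R^2$ restricted to the circle could, a priori, lose positive-definiteness of individual frequency components; but the Bessel expansion $e^{2\cos\theta}=\sum_{k\in\mathbb Z}I_k(2)e^{ik\theta}$ with $I_k(2)=I_{-k}(2)>0$ for all $k$ settles this cleanly. Once that is in hand the rest is routine. I would therefore organize the proof as: (i) reduce to density of the RKHS via the support theorem for Gaussian measures; (ii) compute the circular Fourier expansion of $K$ and note all coefficients are positive; (iii) deduce $H$ contains all trigonometric polynomials; (iv) invoke Weierstrass density.
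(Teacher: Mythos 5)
Your proof is correct, but it takes a genuinely different route from the paper's. The paper argues by duality: since $\mu_\gamma$ is centered Gaussian, its support is a closed linear subspace of $C_p([0;2\pi])$; if it were proper, Hahn--Banach would give a nonzero signed measure $\varkappa$ on the circle annihilating the support, forcing $\int_S e^{-\|\vec u-\vec v\|^2}\varkappa(d\vec v)=0$ for every $\vec u\in\mathbb R^2$, i.e.\ $\Gamma*\varkappa\equiv 0$; taking the Fourier transform on $\mathbb R^2$ and using that $\hat\Gamma$ never vanishes kills $\varkappa$. You instead invoke the support theorem (support equals the closure of the Cameron--Martin space), diagonalize the restricted kernel $K(t,s)=e^{-2}e^{2\cos(t-s)}$ in the circular Fourier basis via the Bessel expansion $e^{2\cos\theta}=\sum_k I_k(2)e^{ik\theta}$ with $I_k(2)>0$, conclude that the RKHS contains all trigonometric polynomials, and finish with Fej\'er/Weierstrass. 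The two arguments are dual to one another --- density of the RKHS is exactly triviality of its annihilator, and a functional annihilates the RKHS iff it annihilates all kernel sections, which is the identity the paper derives --- so the common engine is an injectivity statement for convolution with the Gaussian kernel; you verify it on the dual group $\mathbb Z$ of the circle, the paper on the dual group $\mathbb R^2$ of the plane. Your version is more explicit and yields quantitative information (the Cameron--Martin norm of each reachable function), but it leans on the rotational invariance of the parametrization, i.e.\ on $K$ depending only on $t-s$; the paper's softer argument transfers verbatim to an arbitrary smooth initial curve (a generality the paper explicitly claims in the introduction), since it only uses that the measure $\varkappa$ lives on the curve and that $\hat\Gamma$ is nowhere zero on $\mathbb R^2$. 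Both proofs are complete modulo the same standard Gaussian-measure facts.
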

\begin{proof}
 Since $\mu_\gamma$ is Gaussian measure, then $\rm{supp}\, \mu_\gamma$ is a closed linear subspace in $C_p([0; 2\pi]).$ Suppose, that $\rm{supp}\, \mu_\gamma$ is strictly less then $C_p([0; 2\pi]).$ Then by the Hahn-Banach theorem there exists non-trivial continuous linear functional $\varkappa\in C_p([0; 2\pi])^*$ such that its kernel contains $\rm{supp}\, \mu_\gamma.$ The functional $\varkappa$ can be interpreted as a signed measure on $S.$ Then for the random field $\xi_1$
$$
\int_S\xi_1(\vec{v})\varkappa(d\vec{v})=0.
$$
Hence
\begin{equation}
\label{eq1.3}
\forall \ \vec{u}\in{\mathbb R}^2: \ \ 
\int_Se^{-\|\vec{u}-\vec{v}\|^2}\varkappa(d\vec{v})=E\vec{\xi}_1(\vec{u})\int_S\xi_1(\vec{v})\varkappa(d\vec{v})=0.
\end{equation}
 
 Then
 \begin{equation}
 \label{eq1.4}
 \forall \ \vec{\lambda}\in{\mathbb R}^2:  \ \ \hat{\Gamma}_{\frac{1}{4}}(\vec{\lambda})\hat{\varkappa}(\vec{\lambda})=0,
 \end{equation}
 where $\hat{\Gamma}_{\frac{1}{4}},\hat{\varkappa}$ are the Fourier transforms of ${\Gamma}_{\frac{1}{4}}$ and $\varkappa.$ It follows from \eqref{eq1.4} that $\varkappa$ can not be nontrivial and  our assumption about $\rm{supp}\,\mu_\gamma$ was wrong. Lemma is proved.
\end{proof}

Define on ${\mathbb R}^3$ the following family of functions approximating $\delta$-function at $\vec0$
$$
h_\varepsilon(\vec{x})=\frac{1}{\frac{4}{3}\pi\varepsilon^3}{1\!\!\,{\rm I}}_{B(\vec{0}, \varepsilon)}(\vec{x}), \ \vec{x}\in{\mathbb R}^3, \ \varepsilon>0.
$$
Here
$$
B(\vec{0}, \varepsilon)=\{\vec{x}: \|\vec{x}\|<\varepsilon\}.
$$
\begin{lem}
\label{lem1.2}
There exists a finite limit
$$
\lim_{\varepsilon\to0}\int^b_a\int^d_cEh_\varepsilon(\vec{\gamma}(t_1)-\vec{\gamma}(t_2))dt_1dt_2=
$$
$$
=\int^b_a\int^d_c\frac{dt_1dt_2}
{(2\pi(2-2\Gamma(\vec{u}(t_2)-\vec{u}(t_1))))^{3/2}},
$$
where $0\leq a<b<c<d<2\pi, \ \vec{u}(t)=(\cos t,\ \sin t).$
\end{lem}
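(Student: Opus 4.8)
The plan is to compute $Eh_\varepsilon(\vec{\gamma}(t_1)-\vec{\gamma}(t_2))$ in closed form using the Gaussianity of $\vec\xi$, and then to pass to the limit under the double integral by dominated convergence. First I would observe that for fixed $t_1,t_2$ the vector $\vec{\gamma}(t_1)-\vec{\gamma}(t_2)=\vec\xi(\vec u(t_1))-\vec\xi(\vec u(t_2))$ is a centered Gaussian vector in ${\mathbb R}^3$ whose three coordinates are independent, each with variance
$$
\sigma^2(t_1,t_2):=E\bigl(\xi_1(\vec u(t_1))-\xi_1(\vec u(t_2))\bigr)^2=2\Gamma(\vec0)-2\Gamma(\vec u(t_1)-\vec u(t_2))=2-2\Gamma(\vec u(t_1)-\vec u(t_2)).
$$
Consequently $\vec{\gamma}(t_1)-\vec{\gamma}(t_2)$ has the density $x\mapsto(2\pi\sigma^2(t_1,t_2))^{-3/2}\exp(-\|x\|^2/(2\sigma^2(t_1,t_2)))$, so that
$$
Eh_\varepsilon(\vec{\gamma}(t_1)-\vec{\gamma}(t_2))=\frac{1}{\frac{4}{3}\pi\varepsilon^3}\int_{B(\vec0,\varepsilon)}\frac{e^{-\|x\|^2/(2\sigma^2(t_1,t_2))}}{(2\pi\sigma^2(t_1,t_2))^{3/2}}\,dx,
$$
i.e. it is exactly the average of this Gaussian density over the ball $B(\vec0,\varepsilon)$.

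The key point is the non-degeneracy of $\sigma^2$ on the region of integration. Since $0\le a<b<c<d<2\pi$, every $(t_1,t_2)\in[a,b]\times[c,d]$ satisfies $t_1<c\le t_2$ with $t_1,t_2\in[0,2\pi)$, hence $\vec u(t_1)\ne\vec u(t_2)$ and therefore $\Gamma(\vec u(t_1)-\vec u(t_2))<1$; as $(t_1,t_2)\mapsto\sigma^2(t_1,t_2)$ is continuous, there is $\sigma_0>0$ with $\sigma^2(t_1,t_2)\ge\sigma_0^2$ throughout this compact rectangle. Then the Gaussian density above never exceeds $(2\pi\sigma_0^2)^{-3/2}$, and so does its average over $B(\vec0,\varepsilon)$, giving the uniform bound $Eh_\varepsilon(\vec{\gamma}(t_1)-\vec{\gamma}(t_2))\le(2\pi\sigma_0^2)^{-3/2}$ for all $\varepsilon>0$, a constant which is integrable over the bounded set $[a,b]\times[c,d]$. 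On the other hand, the Gaussian density is continuous at the origin, so its average over $B(\vec0,\varepsilon)$ converges, as $\varepsilon\to0$, to its value at the origin, $(2\pi\sigma^2(t_1,t_2))^{-3/2}$, for each fixed $(t_1,t_2)$. By the dominated convergence theorem the limit may be taken under the double integral, which produces the finite limit
$$
\lim_{\varepsilon\to0}\int^b_a\int^d_cEh_\varepsilon(\vec{\gamma}(t_1)-\vec{\gamma}(t_2))\,dt_1dt_2=\int^b_a\int^d_c\frac{dt_1dt_2}{(2\pi(2-2\Gamma(\vec{u}(t_2)-\vec{u}(t_1))))^{3/2}},
$$
where we used that $\Gamma$ is even, so $\Gamma(\vec u(t_1)-\vec u(t_2))=\Gamma(\vec u(t_2)-\vec u(t_1))$; this is the asserted identity.

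I expect the only genuinely substantive step to be the uniform lower bound $\sigma^2\ge\sigma_0^2>0$ on the rectangle: this is precisely where the hypothesis $b<c$, i.e. the separation of the two intervals of integration, enters, and it is what keeps the Gaussian densities — hence the integrands $Eh_\varepsilon$ — uniformly bounded. No delicate analysis of a singularity along the diagonal $t_1=t_2$ is required here, exactly because $[a,b]\times[c,d]$ stays away from that diagonal; the remaining steps, the explicit Gaussian evaluation and the passage to the limit, are routine.
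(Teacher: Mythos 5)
Your proof is correct and follows exactly the route the paper intends: the paper's own proof is the single sentence that the lemma ``easily follows from the Lebesgue dominated convergence theorem,'' and your argument supplies precisely the missing details --- the explicit Gaussian density of $\vec{\gamma}(t_1)-\vec{\gamma}(t_2)$, the uniform lower bound on the variance coming from the separation $b<c$ (and $d<2\pi$), and the resulting constant dominating function.
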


Proof of the lemma easily follows from the Lebesgue dominated convergence theorem.

As a consequence of Lemma \ref{lem1.2} one can obtain a proof of Theorem \ref{thm1}.

\begin{proof}[Proof of Theorem \ref{thm1}]
It is enough to check that the pieces of $\gamma,$ \ $\vec{\gamma}([a; b])$ and $\vec{\gamma}([c; d]),$ have no intersections with probability one for $0\leq a<b<c<d<2\pi.$ First note, that the set $A$ of $\omega\in\Omega$ such that
\newline $\exists \ t_1\in[a; b], t_2\in[c;d]:$
\begin{equation}
\label{eq1.5}
\vec{\gamma}(t_1)(\omega)=\vec{\gamma}(t_2)(\omega)
\end{equation}
is measurable. Indeed, the restrictions of $\vec{\gamma}(\cdot)$ on $[a; b]$ and $[c; d]$ are continuous functions. Consequently, the opposite property to \eqref{eq1.5} looks like
\newline $\exists \ n\geq1: \forall \ r_1\in{\mathbb Q}\cap[a;b], \ \forall \ r_2\in{\mathbb Q}\cap[c;d]:$
\begin{equation}
\label{eq1.6}
\|\vec{\gamma}(r_1)(\omega)-\vec{\gamma}(r_2)(\omega)\|>\frac{1}{n}.
\end{equation}
Suppose now, that $P(A)>0.$
 Take $\omega\in A$ and consider
\begin{equation}
\label{eq1.6'}
\int^b_a\int^d_c h_\varepsilon(\vec{\gamma}(t_1)(\omega)-\vec{\gamma}(t_2)(\omega))dt_1dt_2.
\end{equation}
 If there exists a pair $(t^0_1, t^0_2)\in[a;b]\times[c;d]$ such that
 $$
 \vec{\gamma}(t^0_1)(\omega)=\vec{\gamma}(t^0_2)(\omega),
 $$
 then, due to differentiability of $\vec{\gamma}(\cdot)(\omega)$ at these points, there exists positive constant (depending on $\omega$) $C(\omega)>0$ such that integral in 
\eqref{eq1.6'}
satisfies the inequality
\newline $\exists \ \varepsilon_0: \forall \ \varepsilon<\varepsilon_0:$
\begin{equation}
\label{eq1.7}
\int^b_a\int^d_c h_\varepsilon(\vec{\gamma}(t_1)(\omega)-\vec{\gamma}(t_2)(\omega))dt_1dt_2\geq C(\omega)\frac{1}{\varepsilon}.
\end{equation}

Together with the Fatou lemma \eqref{eq1.7} and the hypothesis $P(A)>0$ contradict to the statement of  Lemma \ref{lem1.2}. Consequently, our assumption that $P(A)>0$ was wrong. The theorem is proved.
\end{proof}

 It was proved, that the random curve $\vec{\xi}(\gamma_0)$  with probability one has no self-intersection. So $\vec{\xi}(\gamma_0)$ is a random knot. Now let us prove Theorem \ref{thm2}.
\begin{proof}
We start with a purely deterministic construction. For $n\geq1$ consider the uniform partition $0=t_0<\ldots<t_n=2\pi$ (i.e. $t_k=\frac{k2\pi}{n}$). Define $\vec{f}_n$ as a polygonal interpolation of $\vec{f}$ with its values at $t_0, \ldots, t_n.$ Then there exists $n_0$ such, that for $n\geq n_0$ \ $\vec{f}_n$ is homotopic to $\vec{f}$ (i.e. the polygonal line and the curve described by $\vec{f}_n$ and $\vec{f}$ define the knot of the same topological type). It was proved in \cite{20}. Consequently, to check the homotopy between $\vec{\xi}(\gamma_0)$ and $\vec{f},$ it is enough to check the homotopy of corresponding polygonal lines. To check that two polygonal lines are homotopic it is enough to consider flat diagrams and try to get one from another by the finite sequence of Reidemeister moves \cite{22}. Eeach moving is applied to a certain crossroad. So, one have to consider finite sequences of Reidemeister moves and crossroads. Hence the occurrence of homotopy between random and fixed polygonal lines is a random event. 

Now we will prove that this event has a positive  probability. Here we need  auxilary notations and statement. Denote    by $C^2_p([0; 2\pi])$ the set of all functions which are two times continuously differentiable and
$2\pi$-periodic. Let $\eta(t)=\xi(\vec{\gamma}(t)), t\in[0; 2\pi]$ be a Gaussian random process defined with the help of the centered Gaussian random field $\xi,$ which is equidistributed with $\xi_i, i=1,2,3. $ Let $\mu$ be the distribution of $\eta''$ in the space of all functions
$$
M=\{g: g=f'',  \ f\in C^2_p([0; 2\pi])\}
$$
with uniform distance.
\begin{lem}
\label{lem3}
$$
\rm{supp}\,\mu=M.
$$
\end{lem}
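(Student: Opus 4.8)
The plan is to reduce the statement about $\mu$, the law of $\eta''$ on $M$, to the support statement already established in Lemma~\ref{lem1} for the law $\mu_\gamma$ of $\zeta_1$ on $C_p([0;2\pi])$. The key observation is that $M$ is exactly the space of twice-integrated perturbations of periodic $C^2$ functions, and the map $f\mapsto f''$ is a continuous linear surjection from $C^2_p([0;2\pi])$ onto $M$; conversely, given $g\in M$ with the (implicitly required) vanishing mean and vanishing first moment conditions needed for double periodic integration, one recovers $f$ uniquely up to the choice of constants by integrating twice. So I would first make explicit that $M$ carries the uniform distance and that $\eta\in C^2_p$ almost surely, which follows since the covariance $\Gamma$ is $C^\infty$ and $\vec u(t)=(\cos t,\sin t)$ is smooth, so $\eta$ has a $C^\infty$ modification on the circle by the standard Gaussian Kolmogorov-type argument already invoked after \eqref{eq1.1}.

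Next I would argue the support statement directly. Since $\mu$ is a centered Gaussian measure on the Banach space $M$ (centered because $\xi$ is centered, hence $\eta$ and $\eta''$ are centered Gaussian), its support is a closed linear subspace of $M$. Suppose for contradiction that $\mathrm{supp}\,\mu\subsetneq M$. By Hahn--Banach there is a nonzero $\ell\in M^*$ vanishing on $\mathrm{supp}\,\mu$, hence $\ell(\eta'')=0$ almost surely. Composing $\ell$ with the continuous linear double-differentiation map $D\colon C_p^2([0;2\pi])\to M$, $Df=f''$, gives a continuous linear functional $\tilde\ell=\ell\circ D$ on $C^2_p([0;2\pi])$ with $\tilde\ell(\eta)=0$ almost surely. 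The point is then to transfer this to a statement about $\zeta_1$ on $C_p([0;2\pi])$: because $C^2_p$ embeds continuously into $C_p$ and $\eta$ is equidistributed with $\zeta_1$ (both are $\xi$-images of the circle), and because $\mathrm{supp}\,\mu_\gamma=C_p([0;2\pi])$ by Lemma~\ref{lem1}, the process $\zeta_1$ hits a dense set of $C^2$ functions; more precisely I would use that the Cameron--Martin space of $\zeta_1$ (the RKHS of the restricted covariance $\Gamma(\vec u(t)-\vec u(s))$) is dense in $C_p^2$ in the $C^2$ norm — this is the analytic heart and follows from the same Fourier-transform argument as in Lemma~\ref{lem1}, applied now with the extra smoothing from the $\tfrac14$-Gaussian — so $\tilde\ell$ must annihilate a dense subspace of $C^2_p$ and hence vanish, forcing $\ell=0$, a contradiction.

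The main obstacle is the passage through the double-differentiation operator: one must check that $D\colon C^2_p\to M$ is onto (so that $\mathrm{supp}\,\mu = D(\mathrm{supp}\,\mu_\eta$-lift$)$ in the appropriate sense) and, more delicately, that a functional annihilating $\eta''$ really does pull back to a functional annihilating $\eta$ whose vanishing contradicts Lemma~\ref{lem1} rather than merely being compatible with it. The clean way to handle this is to observe that $D$ is a topological isomorphism from the quotient $C^2_p/\{\text{affine functions on the circle}\}$ — equivalently from the closed subspace of $C^2_p$ consisting of functions with $\int_0^{2\pi} f = \int_0^{2\pi} f(t)e^{\pm it}\,dt = 0$ — onto $M$, so that support statements transfer faithfully under $D$ and $D^{-1}$. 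Since the Gaussian law $\mu_\gamma$ of $\zeta_1$ has full support in $C_p$, its push-forward under the (continuous, densely defined on the Cameron--Martin space) smoothing inherent in the covariance has full support in $C^2_p$ modulo the finite-dimensional kernel of $D$, and applying $D$ yields $\mathrm{supp}\,\mu=M$. I expect the verification that the relevant codimension-$3$ subspace of $C^2_p$ is exactly $D^{-1}(M)$, and that restricting attention to it loses no generality because those three linear constraints are themselves almost surely satisfied by $\eta''$ trivially (the mean and first-moment conditions on a second derivative), to be the routine-but-necessary bookkeeping step.
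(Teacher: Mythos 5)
Your proposed reduction to Lemma~\ref{lem1} has a genuine gap: the topologies do not match. Lemma~\ref{lem1} asserts full support of the law of $\zeta_1$ in $C_p([0;2\pi])$ with the \emph{uniform} norm, i.e.\ it kills functionals continuous in the $C^0$ norm. A nonzero $\ell\in M^*$ pulls back under $Df=f''$ to a functional $\tilde\ell$ on $C^2_p$ that is continuous only with respect to the seminorm $\|f''\|_\infty$, and such a $\tilde\ell$ is in general not $C^0$-continuous; so the fact that $\mathrm{supp}\,\mu_\gamma=C_p$ says nothing about whether $\tilde\ell$ must vanish --- full support in a coarser topology does not imply full support in a finer one. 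You implicitly acknowledge this and retreat to the claim that the Cameron--Martin space is dense in $C^2_p$ in the $C^2$ norm ``by the same Fourier-transform argument,'' but that claim \emph{is} the content of the lemma being proved, and your proposal never carries it out. The paper's proof does exactly this work, directly on $M$: by Hahn--Banach a functional $\varkappa\in M^*$ annihilating $\mathrm{supp}\,\mu$ is represented by a finite signed measure on $[0;2\pi]$ acting on $g=f''$; the identity $\int_0^{2\pi}\bigl(E\xi(\vec u)\eta(t)\bigr)''\varkappa(dt)=0$ for all $\vec u$ is read as the vanishing of the convolution $\Gamma*\widetilde\varkappa$, where $\widetilde\varkappa$ is the compactly supported distribution $\langle\varphi,\widetilde\varkappa\rangle=\int_0^{2\pi}\varphi(\vec\gamma(t))''\varkappa(dt)$; since $\widehat\Gamma$ is nowhere zero, $\widetilde\varkappa=0$, and the density of $\{\varphi(\vec\gamma(\cdot))'':\varphi\in S(\mathbb R^2)\}$ in $M$ then forces $\varkappa=0$. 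If your detour through $D$ were completed honestly, you would have to reproduce this same computation in the harder setting of the full $C^2$ norm (where the representing object is a triple of measures acting on $f$, $f'$ and $f''$), so the reduction buys nothing.

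A second, more minor, set of errors concerns your bookkeeping for $D$ and $M$. The kernel of $D:C^2_p\to M$ consists of the constants (a periodic function with vanishing second derivative is affine in $t$, hence constant), not of the span of $1,\cos t,\sin t$; and $M$ is the codimension-one subspace $\{g\in C_p:\int_0^{2\pi}g(t)\,dt=0\}$ of $C_p$, not a codimension-three one. Correspondingly, your claim that $\int_0^{2\pi}\eta''(t)e^{\pm it}\,dt=0$ holds ``trivially'' is false: integrating by parts twice and using periodicity gives $\int_0^{2\pi}\eta''(t)e^{\pm it}\,dt=-\int_0^{2\pi}\eta(t)e^{\pm it}\,dt$, which is not almost surely zero. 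Only the constraint $\int_0^{2\pi}\eta''(t)\,dt=0$ is automatic.
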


\begin{proof} The proof is similar to the proof of Lemma \ref{lem3},
but we include it for the  convenience. Since $\mu$ is centered Gaussian measure in $M,$ then it is enough to check, that there is no such linear continuous functional $\varkappa$ on $M,$ that
$$
\exists \ g\in M: \langle g, \varkappa\rangle\ne0,
$$
$$
\forall \ h\in\rm{supp}\, \mu: \langle h, \varkappa\rangle=0.
$$
To check this, suppose that such $\varkappa$ exsists and get contradiction. Indeed, since $M$ can be viewed as a subspace of $C([0; 2\pi]),$ then $\varkappa$ can be identified with a finite signed measure on $[0; 2\pi].$ Correspondingly for random process $\eta$ one can write
$$
\int^{2\pi}_0\eta(t)''\varkappa(dt)=0.
$$
Then
$$
\forall \ \vec{u}\in{\mathbb R}^2: \ \int^{2\pi}_0(E\xi(\vec{u})\eta(t))''\varkappa(dt)=0.
$$
Hence
$$
\forall \ \vec{u}\in{\mathbb R}^2: \ \int^{2\pi}_0\Gamma(\vec{u}-\vec{\gamma}(t))''\varkappa(dt)=0,
$$
where as before
$$
\Gamma(\vec{u}-\vec{v})=e^{-\|\vec{u}-\vec{v}\|^2}
$$
is a covariance function of the random field $\xi.$ Let us define the generalized function $\widetilde{\varkappa}$ on ${\mathbb R}^2$ as follows\newline
$\forall \varphi\in S({\mathbb R}^2):$
$$
\langle\varphi, \widetilde{\varkappa}\rangle:=\int^{2\pi}_0\varphi(\vec{\gamma}(t))''\varkappa(dt).
$$
Then for the convolution $\Gamma*\widetilde{\varkappa}$ we have
$$
\Gamma*\widetilde{\varkappa}\equiv0.
$$
Consequently, the Fourier transform
$$
\widehat{\Gamma*\widetilde{\varkappa}}=\widehat{\Gamma}\cdot\widehat{\widetilde{\varkappa}}\equiv0.
$$
Since
$$
\forall \ \vec{\lambda}\in{\mathbb R}^2: \ \widehat{\Gamma}(\vec{\lambda})\ne0,
$$
then
$$
\widehat{\widetilde{\varkappa}}\equiv0.
$$
It means, that the generalized function $\widetilde{\varkappa}$ equals zero, Consequently, \newline $\forall \varphi\in S({\mathbb R}^2):$
$$
\int^{2\pi}_0\varphi(\vec{\gamma}(t))''\varkappa(dt)=0.
$$
It remains to note, that the set
$$
\{\varphi(\vec{\gamma}(\cdot))'': \ \varphi\in S({\mathbb R}^2)\}
$$
is dence in $M$ with respect to uniform distance.  Finally our measure $\varkappa$ is such, that\newline
$\forall \ g\in M:$
$$
\int^{2\pi}_0g(t)\varkappa(dt)=0.
$$
This means, that related functional $\varkappa$ on $M$ is zero. Obtained contradiction proves the statement of the lemma.

\end{proof}

Now let us back to the main statement. Consider a curve $F_0,$ which is parametrized by the infinitely differentiable periodic function $\vec{f}: \ [0; 2\pi]\to{\mathbb R}^3$ with the following property of its projection $\vec{f}_*$ on the first two coordinates. For all $\delta\in(0;\pi)$ there exists $\varepsilon_1>0$ such that \newline
$\forall \ t_1, t_2\in[0;2\pi], \ \min(|t_1-t_2|,  2\pi-|t_1, t_2|)>\delta:$
$$
\|\vec{f}_*(t_1)-\vec{f}_*(t_2)\|>\varepsilon_1.
$$
Consider another function $\vec{g}: [0; 2\pi]\to{\mathbb R}^3$ which is periodic, infinitely differentiable and such that for some positive $\varepsilon$
$$
\max_{[0; 2\pi]}\|\vec{f}''(t)-\vec{g}''(t)\|<\varepsilon.
$$
Due to periodicity the following relationships hold
$$
f(t)=c_1+\frac{t}{2\pi}\int^{2\pi}_0sf''(s)ds+\int^t_0(t-s)f''(s)ds,
$$
$$
g(t)=c_2+\frac{t}{2\pi}\int^{2\pi}_0sg''(s)ds+\int^t_0(t-s)g''(s)ds.
$$
Suppose that a planar curve parametrized by $\vec{f}_*$ has no self-intersections. Prove that the same holds for function $\vec{g}_*.$  Consider new function $\vec{\widetilde{f}}$ defined as
$$
\vec{\widetilde{f}}(t)=\vec{c}_2+\frac{t}{2\pi}\int^{2\pi}_0s{\vec{f}}''(s)ds+\int^t_0s{\vec{f}}''(s)ds.
$$
The curves parametrized by the functions $\vec{f}$ and $\vec{\widetilde{f}}$ are connected by parallel translation on $\vec{c}_2-\vec{c}_1$ and, consequently have the same type.
So, it is enough to compare $\vec{\widetilde{f}}$ and $\vec{g}.$ Due to the choice of $\vec{g}$ one has
$$
\max_{[0; 2\pi]}\|\vec{\widetilde{f}}(t)-\vec{g}(t)\|\leq 2\pi^2\varepsilon,
$$
$$
\max_{[0; 2\pi]}\|\vec{\widetilde{f}}'(t)-\vec{g}'(t)\|\leq 2\pi^2\varepsilon.
$$
Consider projections of $\vec{\widetilde{f}}$ and $\vec{g}$ onto the first two coordinates. Let us prove that projection of $\vec{g}$ parametrizes curve without self-intersections. Suppose that for different $t_1, t_2\in[0; 2\pi], t_1<t_2$
$$
\vec{g}_*(t_1)=\vec{g}_*(t_2).
$$
Then one can find $s_1, s_2\in[t_1; t_2]$ such, that the angle between $\vec{g}_*'(s_1)$ and $\vec{g}_*'(s_2)$ equals $\pi.$  Also note, that due to the condition on $\vec{f}_*,$ if $2\pi\varepsilon^2<\varepsilon_1,$ then
$$
t_2-t_1<\delta.
$$
Hence
$$
\|\vec{g}_*'(s_2)-\vec{g}_*'(s_1)\|\leq \delta\max_{[0; 2\pi]}\|\vec{g}_*''(s)\|.
$$
But
$$
\max_{[0; 2\pi]}\|\vec{g}_*'(s)-\vec{\widetilde{f}}'(s)\|\leq 2\pi^2\varepsilon.
$$
Consequently,
$$
2R-4\pi^2\varepsilon\leq \delta\max_{[0; 2\pi]}\|\vec{g}_*''(s)\|\leq
$$
$$
\leq\delta(\max_{[0; 2\pi]}\|\vec{f}_*''(s)\|+\varepsilon).
$$
Finally
$$
2R\leq\delta(\max_{[0; 2\pi]}\|\vec{f}_*''(s)\|+\varepsilon)+4\pi^2\varepsilon.
$$
Now it remains to choose $\delta$ in a such way, that
$$
\delta(\max_{[0; 2\pi]}\|\vec{f}_*''(s)\|<\frac{1}{2}R,
$$
then take such $\varepsilon>0,$ which satisfies inequalities
$$
2\pi\varepsilon^2<\frac{1}{2}\varepsilon_1, \ \ \varepsilon\delta+4\pi\varepsilon^2<\frac{1}{2}R.
$$
With such a choice $\varepsilon, \delta$ we can not find such pair $t_1\ne t_2,$ that $\vec{g}_*(t_1)=\vec{g}_*(t_2).$  Now it follows from these consideration and previous lemma, that with positive probability the random curve $\gamma$ is a trivial knot. To prove, that with positive probability $\gamma$ can be a nontrivial knot let us consider the function $\vec{f}_*: [0; 2\pi]\to{\mathbb R}^2$ with the same as above properties. Bat, in contrast to previous case, suppose, that the curve, which is parametrized by $\vec{f}_*$ has a finite number of self-intersection points. Denote by $t_1<s_1, \ldots, t_n<s_n$ corresponding values of parameters. Take positive $\delta$ such that $\delta$-neighbourhoods of $t_1, \ldots, t_n,\ldots, s_n$ have no intersections. Consider a function $f_3\in C^2_p([0; 2\pi])$ such that $|f_3|>c>0$ on all $\delta$-neighbourhoods of $t_1, \ldots, t_n, s_n.$ Then, it can be checked similarly to the previous considerations, that if the function $\vec{g}\in C^2_p([0; 2\pi], {\mathbb R}^3)$ approximates $\vec{f},$ then the knots parametrized by $\vec{f}$ and $\vec{g}$ have the same to pological type. This completes the proof of the statement, that the random curve $\gamma$ with positive probability has an arbitrary topological type.

\end{proof}

\section{Moving random curve in ${\mathbb R}^3$}
\label{section2}
In this section we propose a model of a moving random curve in ${\mathbb R}^3$ based on equation with interaction
 [14 -- 18] and random map $\vec{\xi}$ from the previous section. An equation with interaction is an equation of the following kind
\begin{equation}
\label{eq2.1}
d\vec{x}(\vec{u},t)=\vec{a}(\vec{x}(\vec{u},t), \mu_t)dt+\int_{{\mathbb R}^d}b(\vec{x} (\vec{u},t),\mu_t, \vec{}p)\vec{W}(d\vec{p}, dt),
 \end{equation}
 $$
 \vec{x}(\vec{u},0)=\vec{u}, \ \vec{u}\in{\mathbb R}^d, \ \mu_t=\mu_0\circ \vec{x}(\cdot, t)^{-1}.
 $$
 Here $\mu_0$ is the initial mass distribution of the system. $\vec{W}$ is an ${\mathbb R}$-valued Wiener sheet on ${\mathbb R}^d\times[0; +\infty).$ $\vec{W}$ has independent coordinates, each of which is a centered Gaussian random measure on ${\mathbb R}^d\times[0; +\infty)$ with independent values on disjoint sets and structural measure, which is equal to Lebesgue measure. $\vec{W}$ plays the role of an outer random media, which perturbs the motion of particles. The particles start from every point of the space. Infinitesimal increments of the particle trajectory depend not only on the position of the particle, but also on the mass distribution of all particles. Such form of the equation allows to consider  infinite systems of interacting particles in random media. Note, that the initial mass distribution is not necessary discrete. It can be continuous and even have density with respect to the Lebesgue measure. For us will be important the case when $\mu_0$ is a visitation measure of a certain smooth closed curve $\gamma_0.$ Namely, suppose, that $\vec{f}\in C^2_p([0; 1], {\mathbb R}^3)$ and $\gamma_0$ is parametrised by $\vec{f}.$ Then define $\mu_0$ as follows
$$
\mu_0(\Delta)=\int^1_0{1\!\!\,{\rm I}}_\Delta(\vec{f}(t))dt.
$$
Note, that $\mu_0$ is a probability measure on ${\mathbb R}^3$ depending on the parametrization of $\gamma_0.$ If we start from such $\mu_0,$ then at the moment $t>0$ \ $\mu_t$ will be a visitation measure of the curve $\gamma_t=\vec{x}(\gamma_0, t),$ which is parametrized by the function $\vec{x}(\vec{f}(\cdot), t).$ In a such way equation \eqref{eq2.1} describes the evolution of the curve. However such model has an essential disadvantage. If the coefficients of the  equation have two bounded continuous derivatives with respect to a spatial variable, then the solution $\vec{x}$ with probability one is jointly continuous with respect $\vec{u}$ and $t$ and diffeomorphic as a map from ${\mathbb R}^3$ to ${\mathbb R}^3$ for a fixed $t.$ This means that $\gamma_0$ is homotopic to $\gamma_t.$ Or, that the curve $\gamma_t$ has the same topological type as $\gamma_0.$ In order to get around such difficulty, we consider in this article not the evolution of $\gamma_t$ itself, but its image in ${\mathbb R}^3$ under the mapping $\vec{\xi}.$ To present the corresponding equation, we need to specify the coefficients, which depend on the visitation measure of $\gamma_t.$ Consider the function
$$
\vec{h}: \ {\mathbb R}^3\times{\mathbb R}^3\to{\mathbb R}^3,
$$
which is bounded and satisfies Lipschitz condition with respect to both variablises. As a space of probability measures we will use the set ${\frak{M}}_2$ of all probability measures on the Borel $\sigma$-field of ${\mathbb R}^3$ equipped with Wasserstain distance of order 2. This distance is defined as follows \cite{21}. For $\mu, \nu\in{\frak{M}}$ consider $C(\mu, \nu)$ a set of all probability distributions on ${\mathbb R}^3\times{\mathbb R}^3$ which have $\mu$ and $\nu$ as its marginals. Then the Wasserstain distance of order 2 between $\mu$ and $\nu$ is
$$
\gamma_2(\mu, \nu)=\inf_{c(\mu, \nu)}\Big(
\iint_{{\mathbb R}^3}\|\vec{u}-\vec{v}\|^2\varkappa(d\vec{u}, d\vec{v})
\Big)^{\frac{1}{2}}.
$$
It is known \cite{21}, that $({\frak{M}}_2, \gamma_2)$ is a complete separable metric space. Now consider a random mapping $\vec{\xi}: {\mathbb R}^3\to{\mathbb R}^3$ which is organized at the  same way as in the previous section (so, we use the same letter for notation). Namely, the coordinates of $\vec{\xi}$ are independent centered Gaussian random fields with the covariance
$$
E\xi_1(\vec{u})\xi_1(\vec{v})=e^{-\|\vec{u}-\vec{v}\|^2}.
$$
For $\mu\in{\frak{M}}_2$ denote by $\mu_\xi$ its image under the mapping $\vec{\xi}$
$$
\mu_\xi=\mu\circ\vec{\xi}^{-1}.
$$
Since $\vec{\xi}$ is infinitely  differentiable random field, then $\mu_\xi$ is a random measure on ${\mathbb R}^3.$ Now consider a function $\vec{a}: {\mathbb R}^3\times{\frak{M}}\to{\mathbb R}^3,$ which is defined as follows
$$
\vec{a}(\vec{u}, \mu)=E\int_{{\mathbb R}^3}h(\vec{u}, \vec{v})\mu_\xi(d\vec{v}).
$$
\begin{lem}
\label{lem2.1}
If $h$ satisfies the Lipschitz condition, then the function $\vec{a}$ satisfies the Lipschitz condition with respect to both variables.
\end{lem}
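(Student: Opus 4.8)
The plan is to reduce the claim to a Lipschitz estimate for the deterministic kernel obtained by averaging $h$ over the Gaussian field $\vec{\xi}$. First I would rewrite $\vec{a}$ using the change of variables $\vec{v}=\vec{\xi}(\vec{w})$ in the inner integral together with Fubini's theorem (legitimate since $h$ is bounded):
$$
\vec{a}(\vec{u},\mu)=\int_{{\mathbb R}^3}\vec{g}(\vec{u},\vec{w})\,\mu(d\vec{w}),\qquad \vec{g}(\vec{u},\vec{w}):=E\,h(\vec{u},\vec{\xi}(\vec{w})).
$$
In particular $\vec{a}$ is well defined and bounded, and the whole problem reduces to showing that $\vec{g}$ is Lipschitz in each of its two variables, uniformly in the other.

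For the variable $\vec{u}$ this is immediate from the Lipschitz property of $h$: if $L$ is a Lipschitz constant for $h$, then $\|\vec{g}(\vec{u}_1,\vec{w})-\vec{g}(\vec{u}_2,\vec{w})\|\leq L\|\vec{u}_1-\vec{u}_2\|$ after taking expectations. For the variable $\vec{w}$ I would use, in addition, the Cauchy--Schwarz inequality and the explicit covariance \eqref{eq1.1}: since the three coordinates of $\vec{\xi}$ are i.i.d. with covariance $\Gamma$,
$$
E\|\vec{\xi}(\vec{w}_1)-\vec{\xi}(\vec{w}_2)\|^2=6\big(1-e^{-\|\vec{w}_1-\vec{w}_2\|^2}\big)\leq 6\|\vec{w}_1-\vec{w}_2\|^2,
$$
using the elementary bound $1-e^{-x}\leq x$ for $x\geq 0$. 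Hence $\|\vec{g}(\vec{u},\vec{w}_1)-\vec{g}(\vec{u},\vec{w}_2)\|\leq L\,(E\|\vec{\xi}(\vec{w}_1)-\vec{\xi}(\vec{w}_2)\|^2)^{1/2}\leq\sqrt6\,L\,\|\vec{w}_1-\vec{w}_2\|$.

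Finally I would transfer these bounds to $\vec{a}$. Integrating the first estimate against $\mu$ gives Lipschitz continuity in $\vec{u}$. For the measure variable, given $\mu,\nu\in{\frak M}_2$ and any coupling $\varkappa\in C(\mu,\nu)$, I would bound $\|\vec{a}(\vec{u},\mu)-\vec{a}(\vec{u},\nu)\|$ by $\sqrt6\,L\iint\|\vec{w}-\vec{w}'\|\,\varkappa(d\vec{w},d\vec{w}')$, then apply Cauchy--Schwarz and take the infimum over $\varkappa$ to obtain $\|\vec{a}(\vec{u},\mu)-\vec{a}(\vec{u},\nu)\|\leq\sqrt6\,L\,\gamma_2(\mu,\nu)$; the triangle inequality then yields the joint Lipschitz bound. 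I do not anticipate a genuine obstacle: the only points needing care are the Fubini interchange (covered by boundedness of $h$) and the elementary estimate on $1-e^{-\|\vec{w}_1-\vec{w}_2\|^2}$, which is precisely where the choice of a smooth, rapidly decreasing covariance ensures that composing with $\vec{\xi}$ does not destroy the Lipschitz regularity.
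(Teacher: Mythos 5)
Your proof is correct and follows essentially the same route as the paper: both arguments reduce the measure variable to a coupling $\varkappa\in C(\mu,\nu)$, pass from $E\|\vec{\xi}(\vec{w}_1)-\vec{\xi}(\vec{w}_2)\|$ to $\bigl(6(1-e^{-\|\vec{w}_1-\vec{w}_2\|^2})\bigr)^{1/2}$ via Cauchy--Schwarz, use $1-e^{-x}\leq x$, and take the infimum over couplings to land on the same bound $L\|\vec{u}_1-\vec{u}_2\|+\sqrt6\,L\,\gamma_2(\mu,\nu)$. Your factorization through the averaged kernel $\vec{g}(\vec{u},\vec{w})=E\,h(\vec{u},\vec{\xi}(\vec{w}))$ is only a cosmetic reorganization of the paper's single chain of inequalities.
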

\begin{proof}
For $\mu_1, \mu_2\in{\frak{M}}_2$ take $\varkappa\in C(\mu_1, \mu_2). $ Then for $u_1, u_2\in {\mathbb R}^3$
$$
\|\vec{a}(\vec{u}_1, \vec{\mu}_1)-\vec{a}(\vec{u}_2, {\mu}_2)\|\leq
$$
$$
\leq E\|\int_{{\mathbb R}^3}h(\vec{u}_1,\vec{v}_1)\mu_{1,\xi}(d\vec{v}_1)-\int_{{\mathbb R}^3}h(\vec{u}_2,\vec{v}_2)\mu_{2,\xi}(\vec{v}_2)\|=
$$
$$
=E\|\int_{{\mathbb R}^3}\int_{{\mathbb R}^3}(h(\vec{u}_1,\vec{v}_1)-h(\vec{u}_2,\vec{v}_2))\varkappa_\xi(d\vec{v}_1, d\vec{v}_2)\|
\leq
$$
$$
\leq
E\|\int_{{\mathbb R}^3}\int_{{\mathbb R}^3}L(\|\vec{u}_1-\vec{u}_2\|+\|\vec{v}_1-\vec{v}_2\|)\varkappa_\xi(d\vec{v}_1, d\vec{v}_2)=
$$
$$
=L\|\vec{u}_1-\vec{u}_2\|+L\int_{{\mathbb R}^3}\int_{{\mathbb R}^3}E\|\vec{\xi}(\vec{v}_1)-\vec{\xi}(\vec{v}_2)\|\varkappa(d\vec{v}_1, d\vec{v}_2)\leq
$$
$$
\leq
L\|\vec{u}_1-\vec{u}_2\|+L\int_{{\mathbb R}^3}\int_{{\mathbb R}^3}
\Big(
E\|\vec{\xi}(\vec{v}_1)-\vec{\xi}(\vec{v}_2)\|^2\Big)^{\frac{1}{2}}\varkappa(d\vec{v}_1, d\vec{v}_2)=
$$
$$
=L\|\vec{u}_1-\vec{u}_2\|+L\int_{{\mathbb R}^3}\int_{{\mathbb R}^3}
\Big(6(1-e^{-\|\vec{v}_1-\vec{v}_2\|^2})\Big)^{\frac{1}{2}}\varkappa(d\vec{v}_1, d\vec{v}_2)\leq
$$
$$
\leq
L\|\vec{u}_1-\vec{u}_2\|+\sqrt{6}L\Big(\int_{{\mathbb R}^3}\int_{{\mathbb R}^3}
\Big(1-e^{-\|\vec{v}_1-\vec{v}_2\|^2}\Big)\varkappa(d\vec{v}_1, d\vec{v}_2)\Big)^{\frac{1}{2}}\leq
$$
$$
\leq
L
\|\vec{u}_1-\vec{u}_2\|+\sqrt{6}L
\Big(
\int_{{\mathbb R}^3}\int_{{\mathbb R}^3}
\|\vec{v}_1-\vec{v}_2\|^2
\varkappa(d\vec{v}_1, d\vec{v}_2)\Big)^{\frac{1}{2}}.
$$

Now taking $\inf$ with respect to $\varkappa\in C(\mu_1,\mu_2)$ one can get the inequality
$$
\|\vec{a}(\vec{u}_1,\mu_1)-\vec{a}(\vec{u}_2,\mu_2)\|\leq L\|\vec{u}_1-\vec{u}_2\|+\sqrt{6}L\gamma_2(\mu_1, \mu_2).
$$
The lemma is proved.
\end{proof}

Consider the following partial case of equation with interaction with the coefficient $\vec{a}$ from the previous lemma.
\begin{equation}
\label{eq2.2}
d\vec{x}(\vec{u}, t)=\vec{a}(\vec{x}(\vec{u},t),\mu_t)dt+\int_{{\mathbb R}^3}b(\vec{x}(\vec{u}, t), \vec{p})\vec{W}(d\vec{p}, dt),
\end{equation}
$$
\vec{x}(\vec{u},0)=\vec{u}, \ \vec{u}\in{\mathbb R}^3, \ \mu_t=\mu_0\circ\vec{x}(\cdot, t)^{-1}.
$$
Suppose, that $\vec{\xi}$ and $\vec{W}$ are independent and $\vec{h}$ has two continuous bounded derivatives on ${\mathbb R}^3\times{\mathbb R}^3.$  Suppose, also, that $b$ has two continuous bounded derivatives as a mapping from ${\mathbb R}^3$ to $L_2({\mathbb R}^3).$ Under such assumptions the solution to the equation with interaction exists and is unique. Namely, the following statement is a partial case of the general theorem from \cite{18}.
\begin{thm}
\label{thm2.2}
Suppose that all formulated above assumptions are satisfied. Then, for the initial mass distribution $\mu_0\in{\frak{M}}_2$ there exists a unique solution $x$ to Cauchy problem \eqref{eq2.2} such that

1) with probability one $\vec{x}$ is continuous with respect to $\vec{u}$ and $t,$ for a fixed $t\geq0 \ \vec{x}(\cdot, t)$ is a diffeomorphism ${\mathbb R}^3$ on itself,

2) for all $t\geq0 \ \mu_t$ is a random element in ${\frak{M}}_2.$
\end{thm}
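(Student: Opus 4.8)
The plan is to decouple the measure dependence, reduce \eqref{eq2.2} to a fixed point problem for the flow of measures $t\mapsto\mu_t$, and then invoke the classical theory of stochastic flows. The first observation is that, because of the expectation over $\vec\xi$ in $\vec a(\vec u,\mu)=E\int_{{\mathbb R}^3}h(\vec u,\vec v)\mu_\xi(d\vec v)$, the coefficient $\vec a$ is a \emph{deterministic} function on ${\mathbb R}^3\times{\frak M}_2$; the random field $\vec\xi$ enters only later, in the construction of the knot. Before anything else I would verify that $\vec u\mapsto\vec a(\vec u,\mu)$ has two bounded continuous derivatives, with bounds not depending on $\mu$: this amounts to differentiating under the expectation and under the integral against $\mu_\xi$, which is legitimate since $\vec h$ has two bounded continuous derivatives. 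Together with Lemma \ref{lem2.1} this makes $\vec a$ an admissible drift for the flow theory.

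Fix $T>0$. For a continuous path $\nu\colon[0;T]\to{\frak M}_2$, consider the classical Itô equation with spatial parameter
\begin{equation}
\label{eqfrozen}
d\vec y(\vec u,t)=\vec a(\vec y(\vec u,t),\nu_t)\,dt+\int_{{\mathbb R}^3}b(\vec y(\vec u,t),\vec p)\vec W(d\vec p,dt),\quad\vec y(\vec u,0)=\vec u.
\end{equation}
Since $\vec a(\cdot,\nu_t)$ and $b$ have two bounded continuous derivatives, Kunita's theorem on stochastic flows gives a unique solution $\vec y^{\nu}$ that is jointly continuous in $(\vec u,t)$ and a diffeomorphism of ${\mathbb R}^3$ for each fixed $t$. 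Set $\Phi(\nu)_t=\mu_0\circ\vec y^{\nu}(\cdot,t)^{-1}$. Boundedness of $\vec a$ and $b$ yields a moment bound $E\|\vec y^{\nu}(\vec u,t)\|^2\le C_T(1+\|\vec u\|^2)$ on $[0;T]$, so integrating against $\mu_0\in{\frak M}_2$ shows $\Phi(\nu)_t\in{\frak M}_2$; joint continuity of $\vec y^{\nu}$ with dominated convergence gives $t\mapsto\Phi(\nu)_t$ continuous in $\gamma_2$. Thus $\Phi$ maps $C([0;T];{\frak M}_2)$ into itself, and a fixed point of $\Phi$ is exactly the measure flow of a solution of \eqref{eq2.2}.

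To produce the fixed point, let $\vec y^i$ be the solution of \eqref{eqfrozen} with $\nu=\nu^i$, $i=1,2$. Since $(\vec y^1(\vec u,t),\vec y^2(\vec u,t))$ pushed forward by $\mu_0$ is a coupling of $\Phi(\nu^1)_t$ and $\Phi(\nu^2)_t$,
$$
\gamma_2\big(\Phi(\nu^1)_t,\Phi(\nu^2)_t\big)^2\le\int_{{\mathbb R}^3}E\|\vec y^1(\vec u,t)-\vec y^2(\vec u,t)\|^2\,\mu_0(d\vec u).
$$
I would bound the right-hand side with the Burkholder inequality and the Lipschitz estimates for $b$ and for $\vec a$ — the drift increment being $\le L\|\vec y^1-\vec y^2\|+\sqrt6\,L\,\gamma_2(\nu^1_s,\nu^2_s)$ by Lemma \ref{lem2.1} — and close with Gronwall's inequality to obtain $\sup_{t\le T}\gamma_2(\Phi(\nu^1)_t,\Phi(\nu^2)_t)^2\le C_T\int_0^T\gamma_2(\nu^1_s,\nu^2_s)^2\,ds$ with $C_T$ independent of $\nu^1,\nu^2$. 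Hence $\Phi$ is a contraction for $T$ small, giving a unique fixed point $\mu$ on $[0;T]$, and concatenation (restarting from $\mu_T$) extends it to $[0;+\infty)$. Putting $\vec x:=\vec y^{\mu}$, property 1) is inherited from Kunita's flow theorem applied to \eqref{eqfrozen} with $\nu=\mu$, and property 2) from the moment bound (so $\mu_t\in{\frak M}_2$) together with joint measurability of $(\omega,\vec u)\mapsto\vec x(\vec u,t)$; uniqueness holds because any other solution has a measure flow that is again a fixed point of $\Phi$, hence equals $\mu$, after which both flows solve the same classical equation \eqref{eqfrozen}. All of this is a particular instance of the general theory of equations with interaction in \cite{18}.

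The hard part will be the uniform-in-$\nu$ estimates: one must ensure the second spatial derivatives of $\vec a(\cdot,\nu)$ are bounded by a constant independent of $\nu$ (so that Kunita's diffeomorphism theorem applies along the whole iteration) and that the Gronwall constant $C_T$ is independent of the iterates (so a single small $T$ works); the differentiation under the $\vec\xi$-expectation and the $\mu_\xi$-integral is exactly where the bounded derivatives of $\vec h$ are used. The moment bound, the Burkholder--Gronwall estimate, and the concatenation to $[0;+\infty)$ are routine.
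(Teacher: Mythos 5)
The paper does not actually prove Theorem \ref{thm2.2}: it is stated as a special case of the general existence--uniqueness theorem for equations with interaction from \cite{18}, and the text's only ``proof'' is the verification (Lemma \ref{lem2.1} plus the smoothness hypotheses on $\vec h$ and $b$) that the coefficients of \eqref{eq2.2} meet that theorem's assumptions. Your sketch therefore goes further than the paper, and the route you take --- freeze the measure flow, apply Kunita's diffeomorphism theorem to the frozen equation, and close a contraction in the measure variable via a coupling bound, Burkholder and Gronwall --- is exactly the standard argument behind the cited general theorem. The preliminary observations (that $\vec a$ is deterministic because of the expectation over $\vec\xi$, and that its spatial derivatives are bounded uniformly in $\mu$ by differentiating under the integral) are correct and are indeed the points one must check before invoking \cite{18}.

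There is, however, one genuine gap in your setup. In \eqref{eq2.2} the measure $\mu_t=\mu_0\circ\vec x(\cdot,t)^{-1}$ is the pushforward of $\mu_0$ under the \emph{random} flow, not the law of a solution as in a McKean--Vlasov equation; the theorem itself asserts in part 2) that $\mu_t$ is a random element of ${\frak M}_2$. Consequently $\Phi(\nu)_t=\mu_0\circ\vec y^{\nu}(\cdot,t)^{-1}$ is a random measure even when $\nu$ is deterministic, so $\Phi$ does \emph{not} map $C([0;T];{\frak M}_2)$ into itself and your fixed-point space is wrong as stated. The repair is standard but must be made: work in the space of $({\mathcal F}_t)$-adapted ${\frak M}_2$-valued processes with finite $\sup_{t\le T}E\gamma_2(\nu_t,\delta_0)^2$, take the contraction estimate in the form $\sup_{t\le T}E\gamma_2(\Phi(\nu^1)_t,\Phi(\nu^2)_t)^2\le C_T\int_0^TE\gamma_2(\nu^1_s,\nu^2_s)^2\,ds$ (your coupling inequality holds $\omega$-wise, so taking expectations costs nothing), and note that the frozen equation \eqref{eqfrozen} now has a random adapted drift $\vec a(\cdot,\nu_t(\omega))$, for which Kunita's flow theory still applies because the derivative bounds are uniform in the measure argument. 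With that correction the rest of your argument --- moment bounds, Gronwall, concatenation, and the identification of any solution's measure flow with the unique fixed point --- goes through and reproduces the content of the theorem in \cite{18}.
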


Now suppose, that $\mu_0$ is a visitation  measure of a closed smooth curve $\gamma_0$ in ${\mathbb R}^3$ which is parametrized by the function $\vec{f}\in C^2_p([0; 2\pi], {\mathbb R}^3).$ Then, due to Theorem \ref{thm2.2} equation \eqref{eq2.2} will define the evolution $\{\gamma_t, t\geq0\}$ of the curve $\gamma_0.$ As  mentioned above all $\gamma_t, t\geq0$ are homotopic and, therefore, have the same topological type. But we will consider curves
$$
\Gamma_t=\vec{\xi}(\gamma_t), \ t\geq0.
$$
As we have already proved they can change its type with  time. $\Gamma_t$ can be considered as a moving random knot in ${\mathbb R}^3.$ In the next section we will consider an example of such stationary motion.

\section{Ornstein--Uhlenbeck random curves process}
\label{section3}
In this section we will consider a partial case of equation \eqref{eq2.2}, which has a stationary solution. Namely, suppose that the motion of $\gamma$ is described by the following equation with interaction
\begin{equation}
\label{eq3.1}
\begin{split}
&
d\vec{x}(\vec{u},t)=A(\vec{x}(\vec{u},t)-\int_{{\mathbb R}^3}\vec{x}(\vec{v},t)\mu_0(d\vec{v}))dt+d\vec{w}(t),\\
&\vec{x}(\vec{u},0)=\vec{u}, \ \vec{u}\in{\mathbb R}^3.
\end{split}
\end{equation}
Here $\mu_0$ is a visitation measure of $\gamma_0.$ As before we suppose that $\gamma_0$ is parametrized by the function $\vec{f}_0\in C^2_p([0; 2\pi], {\mathbb R}^3).$ Then $\mu_0$  has the form
$$
\forall \ \Delta\in{\mathcal B}({\mathbb R}^3): \ \mu_0(\Delta)=\int^{2\pi}_0{1\!\!\,{\rm I}}_\Delta(\vec{f}_0(s))ds.
$$
In (3.1) $\vec{w}$ is a standard Wiener process in ${\mathbb R}^3,$ $A$ is a generator of unitary semi-group in ${\mathbb R}^3.$ It means that the solution to Cauchy problem for matrix-valued equation
$$
\begin{cases}
dU_t=AU_tdt,\\
U_0=I
\end{cases}
$$consists of orthogonal operators, i.e.
$$
\forall \ t\geq0 \ \ U^*_t=U^{-1}_t.
$$
Cauchy problem (3.1) is partial case of \eqref{eq2.2} and, consequently, has a unique solution. Let us clarify the precise form of solution to (3.1). Define for every $t\geq0$ the center of mass for $\mu_t$ as
$$
\vec{m}_t=\int_{{\mathbb R}^3}\vec{v}\mu_t(d\vec{v})=\int_{{\mathbb R}^3}\vec{x}(\vec{u}, t)\mu_0(d\vec{u}).
$$
Then, integrating (3.1) against $\mu_0$ one can get the equation for $\vec{m}_t$
$$
d\vec{m}_t=d\vec{w}(t).
$$
Now, for any $\vec{u}\in{\mathbb R}^3$
$$
d(\vec{x}(\vec{u}, t)-\vec{m}(t))=A(\vec{x}(\vec{u},t)-\vec{m}_t)dt.
$$
Consequently,
$$
\vec{x}(\vec{u},t)-\vec{m}(t)=U_t(\vec{u}-\vec{m}_0).
$$
Hence, the motion of the curve $\gamma$ can be described as follows. Its center of mass is floating over ${\mathbb R}^3$ as a Brownian particle and simultaneously $\gamma$ is rotating around $\vec{m}_t$ accordingly to orthogonal transformation $U_t.$ This character of motion causes the stationarity of  the random knot $\{\Gamma_t; t\geq0\}.$ Let us recall, that 
$$
\Gamma_t=\vec{\xi}(\gamma_t).
$$
Consequently, $\Gamma_t$ is parametrized by the random function $\vec{\xi}(\vec{x}(\vec{f}_0(s), t)),  \ s\in[0; 2\pi].$ Let us shortly denote it by $\vec{\eta}_t.$
\begin{thm}
\label{thm3.1} $\{\eta_t; t\geq0\}$ is a stationary random process  in $C^2_p([0; 2\pi]).$
\end{thm}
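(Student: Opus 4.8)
The plan is to establish stationarity by exhibiting the process $\{\vec\eta_t; t\ge0\}$ as a measurable functional of a pair of driving objects whose joint law is invariant under time shifts, and then checking that the functional itself is shift-equivariant. Concretely, from the explicit solution formula derived just above the statement we have, for each parameter $s\in[0;2\pi]$,
$$
\vec x(\vec f_0(s),t)=\vec m_t+U_t\bigl(\vec f_0(s)-\vec m_0\bigr),
$$
so that $\vec\eta_t(s)=\vec\xi\bigl(\vec m_t+U_t(\vec f_0(s)-\vec m_0)\bigr)$. Since $\vec m_t=\vec m_0+\vec w(t)$ with $\vec w$ a standard Brownian motion in ${\mathbb R}^3$ independent of $\vec\xi$, I would first rewrite this (absorbing the deterministic constant $\vec m_0$) as
$$
\vec\eta_t(s)=\vec\xi\bigl(\vec w(t)+U_t\,\vec g_0(s)\bigr),\qquad \vec g_0(s):=\vec f_0(s)-\vec m_0,
$$
after replacing $\vec\xi(\cdot)$ by the field $\vec\xi(\cdot+\vec m_0)$, which has the same law and the same stationarity properties because the covariance $e^{-\|\vec u-\vec v\|^2}$ is translation invariant.

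The key point is that the law of $\vec\xi$ is invariant under rigid motions of ${\mathbb R}^3$: for any fixed orthogonal $O$ and any fixed vector $\vec b$, the field $\vec u\mapsto\vec\xi(O\vec u+\vec b)$ has the same distribution as $\vec\xi$, since it is centered Gaussian with covariance $e^{-\|O\vec u-O\vec v\|^2}=e^{-\|\vec u-\vec v\|^2}$ (and the coordinatewise independence is preserved because $O$ acts on the \emph{domain}, not on the coordinates of $\vec\xi$). I would combine this with the Markov/stationary-increment structure of the driving data. Fix $h>0$. Using the semigroup property $U_{t+h}=U_hU_t$ and writing $\vec w(t+h)=\vec w(h)+(\vec w(t+h)-\vec w(h))$, one gets
$$
\vec\eta_{t+h}(s)=\vec\xi\bigl(\vec w(h)+(\vec w(t+h)-\vec w(h))+U_hU_t\,\vec g_0(s)\bigr).
$$
Now condition on $\mathcal F_h:=\sigma(\vec w(r),r\le h)$. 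Conditionally on $\mathcal F_h$, the increment process $\widetilde{\vec w}(t):=\vec w(t+h)-\vec w(h)$ is again a standard Brownian motion independent of $\vec\xi$, and $\vec w(h)$, $U_h$ are fixed. Hence, conditionally, $\vec\eta_{t+h}$ has the same law as the process $t\mapsto\vec\xi'\bigl(\widetilde{\vec w}(t)+U_t\,\vec g_0(s)\bigr)$ where $\vec\xi'(\vec u):=\vec\xi\bigl(U_h\vec u+\vec w(h)\bigr)$; by the rigid-motion invariance just noted, $\vec\xi'$ has the same distribution as $\vec\xi$ and is still independent of $\widetilde{\vec w}$. Therefore the conditional law of $\{\vec\eta_{t+h};t\ge0\}$ given $\mathcal F_h$ coincides with the unconditional law of $\{\vec\eta_t;t\ge0\}$; integrating out removes the conditioning and gives equality in law of $\{\vec\eta_{t+h};t\ge0\}$ and $\{\vec\eta_t;t\ge0\}$, which is exactly stationarity.

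Two technical points remain to be filled in. First, the above identities are pointwise-in-$s$; I must check they hold as equalities of $C^2_p([0;2\pi])$-valued processes, i.e. that $s\mapsto\vec\eta_t(s)$ is genuinely a (random) element of $C^2_p$ and that all the maps involved are measurable in the relevant Banach space — this follows from the infinite differentiability of the modification of $\vec\xi$ (stated in Section~1), from $\vec f_0\in C^2_p$, and from the chain rule, since composing a $C^\infty$ field with the $C^2$ curve $s\mapsto\vec w(t)+U_t\vec g_0(s)$ lands in $C^2_p$ with the derivatives depending continuously on the data. Second, one should phrase the rigid-motion invariance as an equality of laws on the space of $C^\infty$ fields (with the topology of uniform convergence on compacts of all derivatives), so that the substitution $\vec\xi\rightsquigarrow\vec\xi'$ is legitimate at the level of the induced laws on path space; this is routine since a Gaussian field's law is determined by its mean and covariance and the covariance is manifestly invariant. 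I expect the main obstacle to be purely bookkeeping: organizing the conditioning argument so that the independence of $\vec\xi'$ from $\widetilde{\vec w}$ is transparent after the $\mathcal F_h$-measurable rotation/translation has been applied, and making sure no hidden dependence on $\mathcal F_h$ sneaks back in through $U_h$ or $\vec w(h)$ — it does not, precisely because they enter only as the fixed parameters of a rigid motion under which the law of $\vec\xi$ is invariant.
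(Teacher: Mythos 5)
Your argument is correct and rests on the same three ingredients as the paper's own proof: the explicit solution formula $\vec{x}(\vec{u},t)=\vec{m}_t+U_t(\vec{u}-\vec{m}_0)$, the invariance of the law of $\vec{\xi}$ under rigid motions of its domain, and the stationary independent increments of $\vec{w}$ together with its independence from $\vec{\xi}$. The only difference is packaging: the paper checks stationarity on finite-dimensional distributions by conditioning on $\vec{x}$ and inspecting the resulting Gaussian covariance matrix, whereas you run the equivalent shift-equivariance argument at the level of the whole process by conditioning on $\mathcal{F}_h$.
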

\begin{proof}
For $n,m\geq1$ consider sets of numbers $\{s_1, \ldots, s_m\}\subset[0; 2\pi],  \ 0\leq t_1<\ldots<t_n.$ Consider the distribution of the random vector\newline $\vec{\eta}_{t_1}(s_1), \ldots, \vec{\eta}_{t_1}(s_m), \ldots, \vec{\eta}_{t_n}(s_1), \ldots, \vec{\eta}_{t_n}(s_m).$ Conditionally on $\vec{x},$ the distribution of this vector is Gaussian with zero mean and a covariance matrix whose elements are of the form
$$
e^{\|\vec{x}(\vec{f}_0(s_{k_1}),t_{j_1})-\vec{x}(\vec{f}_0(s_{k_2}),t_{j_2})\|^2}.
$$
Let us rewrite
$$
\vec{x}(\vec{f}_0(s_{k_1}),t_{j_1})-\vec{x}(\vec{f}_0(s_{k_2}),t_{j_2})=
$$
$$
=U_{t_{j_1}}(\vec{f}_0(s_{k_1})-\vec{m}_0)-U_{t_{j_2}}(\vec{f}_0(s_{k_2})-\vec{m}_0)+
$$
$$
+\vec{w}(t_{j_1})-\vec{w}(t_{j_2})=U_{t_1}(U_{t_{j_1}-t_1}(\vec{f}_0(s_{k_1})-m_0)-
$$
$$
-U_{t_{j_2}-t_1}(\vec{f}_0(s_{k_2})-m_0)+U^{-1}_{t_1}(\vec{w}(t_{j_1})-\vec{w}(t_{j_2}))
$$
Note, that due to the choice of covariance function the random field $\vec{\xi}$ has the following property
\newline
$\forall \ \vec{u}_1, \ldots, \vec{u}_n\in{\mathbb R}^3$
$$
(\vec{\xi}(\vec{u}_1),\ldots, \vec{\xi}(\vec{u}_n))\overset{d}{=}(\vec{\xi}(U^{-1}_{t_1}\vec{u}_1),\ldots, \vec{\xi}(U^{-1}_{t_1}\vec{u}_n)).
$$
Also note, that $U^{-1}_{t_1}\vec{w}(t), t\geq0$ is a standard Wiener process. Consequently, the distribution of $(\vec{\eta}_{t_1}(s_1), \ldots,\vec{\eta}_{t_n}(s_m) )$ depends only on the differences between $t_1, \ldots, t_n.$ This means stationarity of the considered process. The theorem is proved.
\end{proof}

Let us check that the random knot $\Gamma_t$ changes its topological types. Consider two deterministic knots $G_1, G_2,$which are not of the same type. It is enough to prove, that there exists such $t>0$ that the probability
\begin{equation}
\label{eq3.1}
P\{ \Gamma_0 \ \mbox{has type} \ G_1, \ \Gamma_t \ \mbox{has type} \ G_2 \ \}
\end{equation}
is positive. We will check this by using a rapid decrease of covariance of the random field $\vec{\xi}$ and the independence of $\vec{\xi}$ and $\vec{w}.$

\begin{thm}
\label{thm3.2}
There exists such $t_0,$ for which probability \eqref{eq3.1} is positive.
\end{thm}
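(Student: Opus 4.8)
The plan is to represent $\Gamma_0$ and $\Gamma_{t_0}$ as images under the single random field $\vec\xi$ of two closed curves which, with positive probability, lie far apart in ${\mathbb R}^3$, and then to use the Gaussian decay of the covariance of $\vec\xi$ to make the restrictions of $\vec\xi$ to neighbourhoods of these two curves essentially independent. Recall from the computation preceding Theorem \ref{thm3.1} that $\vec x(\vec f_0(s),t)=U_t(\vec f_0(s)-\vec m_0)+\vec m_0+\vec w(t)$, so $\gamma_t$ is the image of the fixed curve $\gamma_0$ under the rigid motion $R_{\vec z}\colon\vec u\mapsto U_t(\vec u-\vec m_0)+\vec m_0+\vec z$ with random translation $\vec z=\vec w(t)$. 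Since $\vec\xi$ is independent of $\vec w$, conditioning on $\{\vec w(t_0)=\vec z\}$ leaves the law of $\vec\xi$ unchanged and replaces $\gamma_{t_0}$ by the deterministic curve $\gamma_0^{(\vec z)}:=R_{\vec z}(\gamma_0)$, so that
$$
P\{\Gamma_0\ \mbox{has type}\ G_1,\ \Gamma_{t_0}\ \mbox{has type}\ G_2\}=\int_{{\mathbb R}^3}P\{\vec\xi(\gamma_0)\ \mbox{has type}\ G_1,\ \vec\xi(\gamma_0^{(\vec z)})\ \mbox{has type}\ G_2\}\,{\mathbb P}_{\vec w(t_0)}(d\vec z).
$$
Because $\vec w(t_0)$ is a non-degenerate Gaussian vector, $P\{\|\vec w(t_0)\|\ge R\}>0$ for every $R>0$ and $t_0>0$; hence it suffices to find $R$ such that the integrand is bounded below by a positive constant whenever $\|\vec z\|\ge R$, and we may take $t_0=1$.

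First, anchors. By Theorem \ref{thm2} applied to $G_1$ and to $G_2$, together with the fact (used in its proof via $f(t)=c+\frac{t}{2\pi}\int_0^{2\pi}sf''(s)\,ds+\int_0^t(t-s)f''(s)\,ds$) that the knot type of a curve in $C^2_p$ depends only on its second derivative up to an irrelevant translation, there are $\vec g_1,\vec g_2\in C^2_p([0;2\pi],{\mathbb R}^3)$ and $\varepsilon>0$ such that every $\vec\psi\in C^2_p$ with $\|\vec\psi''-\vec g_i''\|_\infty<\varepsilon$ parametrises a knot of type $G_i$, while by Lemma \ref{lem3} the events $\{\|\vec\xi(\vec f_0(\cdot))''-\vec g_i''\|_\infty<\varepsilon/4\}$ have positive probabilities $p_1,p_2$. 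Since the law of $\vec\xi$ is invariant under rigid motions of ${\mathbb R}^3$ (as used in the proof of Theorem \ref{thm3.1}) and the kernel $\psi$ below is radial, the process $s\mapsto\vec\xi(R_{\vec z}(\vec f_0(s)))$, which parametrises $\vec\xi(\gamma_0^{(\vec z)})$, is equidistributed with $\vec\xi(\vec f_0(\cdot))$; in particular each $\vec\xi(\gamma_0^{(\vec z)})$ is a.s. a genuine knot (Theorem \ref{thm1}) whose $\vec g_2$-tube is charged with probability $p_2$ for every $\vec z$.

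Next, localisation. Fix a large $\rho>0$; for $\|\vec z\|\ge R:=2\rho+2\,\mathrm{diam}(\gamma_0)+1$ let $D_1,D_2$ be the $\rho$-neighbourhoods of $\gamma_0$ and of $\gamma_0^{(\vec z)}$, so $D_1\cap D_2=\varnothing$. Writing $\xi_j(\vec u)=\int_{{\mathbb R}^3}\psi(\vec u-\vec p)\,W_j(d\vec p)$ with independent white noises $W_1,W_2,W_3$ and $\psi$ the Gaussian convolution square root of $\Gamma$ (so $\widehat\psi=\sqrt{\widehat\Gamma}>0$), put $\vec\xi^{(k)}=\big(\int_{D_k}\psi(\cdot-\vec p)\,W_j(d\vec p)\big)_{j=1,2,3}$. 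Then $\vec\xi^{(1)}$ and $\vec\xi^{(2)}$ are independent, and on $\gamma_0$ the field $\vec\xi-\vec\xi^{(1)}$ is driven by white noise at distance $\ge\rho$ from $\gamma_0$; differentiating $\psi$ under the integral sign, the variances of $(\vec\xi-\vec\xi^{(1)})(\vec f_0(\cdot))$ and of its first two $s$-derivatives are at most $C(\gamma_0)\rho^2e^{-c\rho^2}$, so by standard bounds for suprema of Gaussian processes $\delta(\rho):=P\{\|(\vec\xi-\vec\xi^{(1)})(\vec f_0(\cdot))''\|_\infty\ge\varepsilon/4\}\to0$ as $\rho\to\infty$, uniformly in $\|\vec z\|\ge R$, and symmetrically along $\gamma_0^{(\vec z)}$ for $\vec\xi-\vec\xi^{(2)}$. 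Finally, repeating the Fourier argument of Lemma \ref{lem3} with the truncated covariance $(\vec u,\vec v)\mapsto\int_{D_1}\psi(\vec u-\vec p)\psi(\vec v-\vec p)\,d\vec p$ in place of $\Gamma$ — and using that $\psi$ is real-analytic, so that a $\psi$-convolution vanishing on the open set $D_1$ vanishes identically — one checks that $\vec\xi^{(1)}(\vec f_0(\cdot))''$ still has full support in $M$, and likewise $\vec\xi^{(2)}$ along $\gamma_0^{(\vec z)}$, with a law independent of $\vec z$ by the same isometry invariance.

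To conclude: $\{\vec\xi(\gamma_0)\ \mbox{has type}\ G_1\}$ contains $A_1\cap\{\|(\vec\xi-\vec\xi^{(1)})(\vec f_0(\cdot))''\|_\infty<\varepsilon/2\}$, where $A_1:=\{\|\vec\xi^{(1)}(\vec f_0(\cdot))''-\vec g_1''\|_\infty<\varepsilon/2\}$, and similarly $\{\vec\xi(\gamma_0^{(\vec z)})\ \mbox{has type}\ G_2\}\supseteq A_2\cap\{\|(\vec\xi-\vec\xi^{(2)})(R_{\vec z}\vec f_0(\cdot))''\|_\infty<\varepsilon/2\}$ with the analogous $A_2$. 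The anchor events $A_1,A_2$ are $\sigma(W|_{D_1})$- and $\sigma(W|_{D_2})$-measurable, hence independent, and for $\rho$ large $P(A_i)\ge p_i-\delta(\rho)\ge p_i/2$ (each $A_i$ contains the $\varepsilon/4$-tube of the previous paragraph intersected with a small-discrepancy event of probability $\ge1-\delta(\rho)$); by isometry invariance $P(A_2)$ does not depend on $\vec z$. Hence, for $\|\vec z\|\ge R$,
$$
P\{\vec\xi(\gamma_0)\ \mbox{type}\ G_1,\ \vec\xi(\gamma_0^{(\vec z)})\ \mbox{type}\ G_2\}\ \ge\ P(A_1)P(A_2)-2\delta(\rho)\ \ge\ \tfrac14 p_1p_2-2\delta(\rho),
$$
which is positive once $\rho$ is chosen with $2\delta(\rho)<\tfrac18 p_1p_2$. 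Together with $P\{\|\vec w(1)\|\ge R\}>0$ and the integral representation of the first paragraph, this proves the theorem with $t_0=1$. The main obstacle is precisely the quantitative $C^2$-localisation — controlling, uniformly and in the norm of two derivatives, how much the far part of the driving white noise perturbs $\vec\xi$ along a given curve — together with the verification that the truncated field $\vec\xi^{(1)}$ retains the full $C^2$-support property of Lemma \ref{lem3}, so that the anchor events stay bounded away from $0$ as $\rho\to\infty$; these rest, respectively, on differentiating the Gaussian kernel under the integral with a Gaussian tail bound, and on the real-analyticity of $\psi$.
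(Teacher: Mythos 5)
Your proposal is correct and its skeleton coincides with the paper's: represent $\gamma_{t_0}$ as a rigid motion $U_{t_0}(\cdot-\vec m_0)+\vec m_0+\vec w(t_0)$ of $\gamma_0$, use the isometry invariance of the law of $\vec\xi$ and its independence from $\vec w$, show that the two type events become (almost) independent once the translation $\vec w(t_0)$ is large, and note that a large translation occurs with positive probability. The genuine difference is in how the decoupling at large separation is handled. The paper simply asserts, from the decay of the covariance at infinity, a uniform mixing inequality $|P(A\cap B)-P(A)P(B)|<\tfrac12P(A)P(B)$ for the two type events whenever $\|\vec u\|\ge R$ — a statement about arbitrary (highly nonlinear) functionals of the field that is not an immediate consequence of covariance decay and is given no further justification. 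You instead prove a lower bound directly: you write $\vec\xi$ as a moving average $\psi*W$ of white noise with a radial Gaussian kernel, split the noise over disjoint $\rho$-neighbourhoods of the two curves to manufacture genuinely independent anchor fields $\vec\xi^{(1)},\vec\xi^{(2)}$, control the far contribution in $C^2$-norm along each curve by Gaussian tail bounds, and re-run the Lemma \ref{lem3} support argument (via real-analyticity of $\psi$) for the truncated fields so that the anchor events keep probabilities bounded below as $\rho\to\infty$. This buys a complete and quantitative proof of exactly the step the paper leaves as an assertion, at the cost of more machinery; you also replace the paper's appeal to $\|\vec w(t)\|\to\infty$ a.s.\ by the simpler observation that $P\{\|\vec w(1)\|\ge R\}>0$, which lets you take $t_0=1$. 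The only points to tidy are cosmetic: the thresholds $\varepsilon/4$ versus $\varepsilon/2$ in the definition of $\delta(\rho)$ should be harmonized, and the uniformity of $\delta(\rho)$ in $\vec z$ should be stated as a consequence of the isometry invariance of the pair $(\vec\xi^{(2)},\vec\xi-\vec\xi^{(2)})$ along $\gamma_0^{(\vec z)}$, as you implicitly use.
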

\begin{proof}
First, note that for an arbitrary $t\geq0$ and $\vec{u}\in{\mathbb R}^3$
$$
P\{ \vec{\xi}(U_t\gamma_0+\vec{u}) \ \mbox{has type} \ G_2 \ \}=P\{ \vec{\xi}(\gamma_0) \ \mbox{has type} \ G_2 \ \}.
$$
This follows from the form of covariance of $\vec{\xi}.$ Since the covariance of $\vec{\xi}$ decrease to 0 on infinity, there exists such $R>0,$ that\newline
$\forall \ t>0 \ \forall \ \vec{u}\in{\mathbb R}^3, \|\vec{u}\|\geq R$
$$
|P\{\vec{\xi}(\gamma_0) \ \mbox{has type} \ G_1, \vec{\xi}(U_t\gamma_0+\vec{u}) \ \mbox{has type} \ G_2 \ \} -
$$
$$
P\{ \vec{\xi}(\gamma_0) \ \mbox{has type} \ G_1 \}\cdot P\{ \vec{\xi}(U_t\gamma_0+\vec{u}) \ \mbox{has type} \ G_2 \ \}|<
$$
$$
<\frac{1}{2}P\{ \vec{\xi}(\gamma_0) \ \mbox{has type} \ G_1 \ \} \cdot
P\{ \vec{\xi}(U_t\gamma_0+\vec{u}) \ \mbox{has type} \ G_2 \ \}.
$$
Now the statement of the theorem follows from the fact, that in ${\mathbb R}^3$
$$
\|\vec{w}(t)\|\to+\infty, \ t\to\infty \ \mbox{a.s.}
$$
and the independence of $\vec{w}$ and $\vec{\xi}.$ The theorem is proved.
\end{proof}

{\it Remark}. Since it was proved in Section 1 that for fixed $t\geq0$ \ $\Gamma_t$ has no self-intersections with probability one, then the constructed stationary random knot $\{\Gamma_t; t\geq0\}$ changes its type at random moments of time.

\end{document}